\newtheorem{thm}{Theorem}[section]
\newtheorem{lem}[thm]{Lemma}
\newtheorem{exa}[thm]{Example}
\theoremstyle{definition}
\newcommand{\scr}[1]{\mathscr #1}
\definecolor{wco}{rgb}{0.5,0.2,0.3}
\renewcommand{\bar}{\overline}
\numberwithin{equation}{section}
\newtheorem{rem}{Remark}[section]
\newcommand{\ua}{\uparrow}
\renewcommand{\hat}{\widehat}
\renewcommand{\tilde}{\widetilde}
\title{{\bf A note on strong convergence of implicit scheme for SDEs under local one-sided Lipschitz conditions\footnote{Supported by NSFC(No., 11561027, 11661039, 71371193), NSF of Jiangxi(No., 20161BAB211018), Scientific Research Fund of Jiangxi Provincial Education Department(No., GJJ150444).}
}}
\author{Li Tan\thanks{School of Statistics, Jiangxi University of Finance and Economics, Nanchang, Jiangxi, 330013, P. R. China. Email: tltanli@126.com.} \thanks{Research Center of Applied Statistics, Jiangxi University of Finance and Economics, Nanchang, Jiangxi, 330013, P. R. China.} \,  and \,  Chenggui Yuan\thanks{Department of Mathematics, Swansea University, Swansea, SA2 8PP, UK. Email: C.Yuan@swansea.ac.uk.}}
\begin{document}
\def\R{\mathbb R}  \def\ff{\frac} \def\ss{\sqrt} \def\B{\mathbf
B}
\def\N{\mathbb N} \def\kk{\kappa} \def\m{{\bf m}}
\def\dd{\delta} \def\DD{\Delta} \def\vv{\varepsilon} \def\rr{\rho}
\def\<{\langle} \def\>{\rangle} \def\GG{\Gamma} \def\gg{\gamma}
  \def\nn{\nabla} \def\pp{\partial} \def\EE{\scr E}
\def\d{\text{\rm{d}}} \def\bb{\beta} \def\aa{\alpha} \def\D{\scr D}
  \def\si{\sigma} \def\ess{\text{\rm{ess}}}
\def\beg{\begin} \def\beq{\begin{equation}}  \def\F{\scr F}
\def\Ric{\text{\rm{Ric}}} \def\Hess{\text{\rm{Hess}}}
\def\e{\text{\rm{e}}} \def\ua{\underline a} \def\OO{\Omega}  \def\oo{\omega}
 \def\tt{\tilde} \def\Ric{\text{\rm{Ric}}}
\def\cut{\text{\rm{cut}}} \def\P{\mathbb P} \def\ifn{I_n(f^{\bigotimes n})}
\def\C{\scr C}      \def\aaa{\mathbf{r}}     \def\r{r}
\def\gap{\text{\rm{gap}}} \def\prr{\pi_{{\bf m},\varrho}}  \def\r{\mathbf r}
\def\Z{\mathbb Z} \def\vrr{\varrho} \def\ll{\lambda}
\def\L{\scr L}\def\Tt{\tt} \def\TT{\tt}\def\II{\mathbb I}
\def\i{{\rm in}}\def\Sect{{\rm Sect}}\def\E{\mathbb E} \def\H{\mathbb H}
\def\M{\scr M}\def\Q{\mathbb Q} \def\texto{\text{o}} \def\LL{\Lambda}
\def\Rank{{\rm Rank}} \def\B{\scr B} \def\i{{\rm i}} \def\HR{\hat{\R}^d}
\def\to{\rightarrow}\def\l{\ell}
\def\8{\infty}\def\Y{\mathbb{Y}}

\maketitle

\begin{abstract}
Under a local one-sided Lipschitz condition,  Krylov \cite{KR} proved the existence and uniqueness of  the strong solutions for stochastic differential equations by using the  Euler-Maruyama approximation, where he showed that the sequence of numerical solutions converges to the true solution in probability as the stepsize tends to zero.
 In this note, we shall extend the results in \cite{KR} and investigate  an implicit numerical scheme for these equations under a local one-sided Lipschitz condition.

\end{abstract}
\noindent
{\bf AMS Subject Classification}:   65C30, 65L20 \\
\noindent
 {\bf Keywords}: stochastic differential equations; implicit scheme; local one-sided Lipschitz; strong convergence


\section{Introduction}

The Euler-Maruyama (EM) scheme is a  basic numerical method for simulating solutions  for   stochastic differential equations (SDEs).  It plays an important role in numerical analysis since many SDEs do not have  an explicit solution.  Many papers have studied the  convergence and stability of the EM scheme for SDEs, and  most of the early works were concentrated on SDEs under a global Lipschitz condition ( see Kloeden and Platen \cite{kp92} for example ). Since the global Lipschitz condition is too strong for most equations,  weaker conditions have been  considered more recently. For example, Higham et. al \cite{hms02} studied strong convergence of the EM  method under a local Lipschitz condition.  They obtained   convergence results  if the SDEs satisfy the  local Lipschitz condition and  the $p$-th moments of the exact and numerical solutions are bounded for some $p>2$. Yuan and Mao \cite{ym08} improved the work of \cite{hms02} and provided convergence rate under a  local Lipschitz condition, where the local growth rate is logarithmic. Hutzenthaler et al. \cite{hjk12} proposed a tamed Euler scheme and obtained strong convergence of the scheme for SDEs with superlinearly growing and global one-sided Lipschitz continuous drift coefficients. Mao \cite{m03} established strong mean square convergence of  the EM scheme for stochastic functional differential equations under the local Lipschitz condition with the linear growth condition. Influenced by the classic truncated method, the truncated EM scheme was introduced in \cite{m15, m16} where convergence rates were obtained under local Lipschitz conditions.

As a  generalization of the EM scheme, the $\theta$-EM scheme has also been  widely considered. Mao and Szpruch \cite{ms13} dealt with strong convergence and stability of the $\theta$-EM scheme for SDEs with non-linear and non-Lipschitzian coefficients. Zong et. al \cite{zwh152} established convergence and stability of the $\theta$-EM scheme for stochastic differential delay equations with non-global Lipschitz continuous coefficients. Tan and Yuan \cite{ty162} studied the strong convergence and almost sure convergence for neutral stochastic differential delay equations under non-global Lipschitz continuous coefficients.

Most of the works mentioned above are concerned with SDEs with global or local Lipschitz conditions. However, for some equations, the local Lipschitz condition is too strong for the drift coefficient. For example,  $b(x)=x^3-|x|^{\frac{1}{2}}$  is not local Lipschitz, while we can show that $b(x)$ is local one-sided Lipschitz. There are also some works on non-local Lipschitz conditions. For example, Krylov \cite{KR} proved the existence and uniqueness of strong solutions for  SDEs  under a  local  one-sided Lipschitz condition by using the  EM approximation.  Gy\"{o}ngy \cite{g98} studied almost sure convergence for SDEs on domains of $\mathbb{R}^n$, and proved that if the drift satisfied the monotonicity condition and the diffusion coefficient was Lipschitz continuous, then the Euler scheme converged to the exact solution almost surely with rate less than 1/4. Gy\"{o}ngy and Sabanis \cite{gs13} extended the results of \cite{g98} to SDEs with delay, and showed that the rate of almost sure convergence under local Lipschitz condition is less than 1/2, while the rate is less than 1/4 under the  local one-sided Lipschitz condition. Later, Sabanis \cite{s13, s15} treated the tamed EM scheme of SDEs under global one-sided Lipschitz and local one-sided Lipschitz conditions, respectively. In their paper, strong convergence rates were obtained under a global one-sided Lipschitz condition while for the local one-sided Lipschitz cases, no convergence rates were obtained. To the best of our knowledge, there is no paper regarding the strong convergence of implicit numerical schemes under local one-sided Lipschitz conditions.  This leaves open the question of the  strong convergence of the  $\theta$-EM solutions under local one-sided Lipschitz conditions.  In this note, we shall extend the results in \cite{KR} and investigate  an implicit numerical scheme for these equations under a local one-sided Lipschitz condition.

Since the $\theta$-EM scheme is implicit, to guarantee the existence and uniqueness of numerical solutions, according to the theory of monotone operators, the drift coefficient is required to satisfy a global one-sided Lipschitz condition, while  we only  assume the drift coefficients satisfy a  local one-sided Lipschitz condition. Thus, in order to guarantee that the $\theta$-EM scheme is well defined, we modify the $\theta$-EM scheme with a truncated method and show that the modified $\theta$-EM solution converges strongly to the exact solution.

\section{Preliminaries}
Throughout the paper, we let $(\mathbb{R}^n, \<\cdot,\cdot\>, |\cdot|)$ be an $n$-dimensional Euclidean
space. Denote $\mathbb{R}^{n\times m}$ by the set of all $n\times m$ matrices endowed with Hilbert-Schmidt norm
 $\|A\|:=\sqrt{\mbox{trace}(A^*A)}$ for every $A\in \mathbb{R}^{n\times m}$, in which $A^*$ denotes the transpose of $A$.
 Let  $(\Omega, \mathscr{F}, \{\mathscr{F}_t\}_{t\ge 0}, \mathbb{P})$ be a complete
probability space,   let $\{W(t)\}_{t\ge0}$ be a $m$-dimensional Brownian motion defined on this probability space. In this paper, we consider the following SDE on $\mathbb{R}^n$:
\begin{equation}\label{brownian}
\begin{split}
\mbox{d}X(t)=&b(X(t))\mbox{d}t+\sigma(X(t))\mbox{d}W(t), t\ge 0,
\end{split}
\end{equation}
with initial data $X(0)=x_0$, where $b:\mathbb{R}^n\rightarrow\mathbb{R}^n$, $\sigma:\mathbb{R}^n\rightarrow\mathbb{R}^{n\times m}$.
In order to guarantee the existence and uniqueness of solutions to \eqref{brownian}, we shall assume that for any $x,y\in\mathbb{R}^n$,
\begin{enumerate}
\item[{\bf (A1)}] There exist positive constants $L$ and $l\ge 1$ such that
\begin{equation*}
\langle x,b(x)\rangle\vee\|\sigma(x)\|^2\le L(1+|x|^2),
\end{equation*}
and
\begin{equation*}
|b(x)|\le L(1+|x|^l).
\end{equation*}
\item[{\bf (A2)}] For any $R\ge 1$, there exists a positive constant $M_R\ge 1$ such that
\begin{equation*}
\langle x-y,b(x)-b(y)\rangle\vee\|\sigma(x)-\sigma(y)\|^2\le M_R|x-y|^2
\end{equation*}
for any $|x|\vee|y|\le R$.
\end{enumerate}

We call {\bf (A2)}  local one-sided Lipschitz condition.

\begin{exa}\label{example1}
{\rm Consider the following SDE on $\mathbb{R}$
\begin{equation*}
\mbox{d}X(t)=\left(X(t)+|X(t)|^2-X(t)^3-|X(t)|^{\frac{1}{2}}\right)\mbox{d}t+X(t)\mbox{d}W(t),
\end{equation*}
where $W$ is a scalar Brownian motion. Let $b(x)=x+|x|^2-x^3-|x|^{\frac{1}{2}}$ and $\sigma(x)=x$, we see that $b(x)$ is not local Lipschitz, and moreover, we can show that $b$ and $\sigma$ satisfy assumptions (A1)-(A2). Since it is obvious that $|b(x)|\le L(1+|x|^3)$, and by the Young inequality, we have
\begin{equation*}
\langle x,b(x)\rangle=x\left(x+|x|^2-x^3-|x|^{\frac{1}{2}}\right)\le\frac{5}{4}|x|^2+|x|^{\frac{3}{2}}\le2(1+|x|^2).
\end{equation*}
Besides, for any $R\ge 1$ and $|x|\vee|y|\le R$
\begin{equation*}
\begin{split}
\langle x-y,b(x)-b(y)\rangle=&(x-y)\left(x+|x|^2-x^3-|x|^{\frac{1}{2}}-y-|y|^2+y^3+|y|^{\frac{1}{2}}\right)\\
\le &(1+|x|+|y|)|x-y|^2-(x-y)(|x|^{\frac{1}{2}}-|y|^{\frac{1}{2}})\\
\le&(2R+1)|x-y|^2.
\end{split}
\end{equation*}
}
\end{exa}

\begin{lem}\label{exactpm}
{\rm Fix any $T>0$, let (A1)-(A2) hold, then there exists a unique solution $\{X(t)\}_{t\in[0,T]}$ to equation \eqref{brownian}, and the solution satisfies that
\begin{equation*}
\mathbb{E}\left(\sup\limits_{t\in[0,T]}|X(t)|^p\right)\le C,
\end{equation*}
for any $p\ge2$, where $C$ is a positive constant.
}
\end{lem}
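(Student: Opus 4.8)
The plan is to prove this in two stages: first establish existence and uniqueness of the strong solution under (A1)--(A2), and then derive the $p$-th moment bound. Since (A2) is only a \emph{local} one-sided Lipschitz condition and (A1) gives at most polynomial growth of $b$, the standard global-Lipschitz machinery does not apply directly. The natural route for existence is a truncation argument: for each integer $R\ge 1$ pick a smooth cutoff $\pi_R:\mathbb{R}^n\to\mathbb{R}^n$ with $\pi_R(x)=x$ for $|x|\le R$ and $|\pi_R(x)|\le R$ everywhere (e.g.\ $\pi_R(x)=x\wedge R$ radially), and set $b_R=b\circ\pi_R$, $\sigma_R=\sigma\circ\pi_R$. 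Then $b_R,\sigma_R$ are globally one-sided Lipschitz / globally Lipschitz (the one-sided bound for $b_R$ follows from (A2) with constant $M_R$, using that the radial projection is $1$-Lipschitz and the projection of a pair $(x,y)$ onto the ball does not increase $\langle x-y,\cdot\rangle$-type quantities; a short convexity remark handles this), and they have linear growth, so the truncated SDE has a unique strong solution $X_R$. Defining the exit time $\tau_R=\inf\{t:|X_R(t)|\ge R\}$, pathwise uniqueness gives $X_R=X_{R'}$ on $[0,\tau_R\wedge\tau_{R'}]$ for $R'\ge R$, so the $\tau_R$ are nondecreasing and we may patch the $X_R$ into a process $X$ on the stochastic interval $[0,\tau_\infty)$, $\tau_\infty=\lim_R\tau_R$. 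The remaining point is non-explosion, i.e.\ $\tau_\infty=\infty$ a.s., which will come out of the moment estimate below; pathwise uniqueness of the limit process on $[0,T]$ follows again from (A2) by a localized Gronwall argument.

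The core of the lemma is therefore the a priori bound on $\mathbb{E}(\sup_{t\le T}|X(t)|^p)$, and this rests solely on (A1). I would work first with the truncated solution $X_R$ stopped at $\tau_R$ (so everything is genuinely integrable), apply It\^o's formula to $|X_R(t\wedge\tau_R)|^p$, and use
\[
\d|X|^p = p|X|^{p-2}\langle X,b(X)\rangle\,\d t + \tfrac{p}{2}|X|^{p-2}\|\sigma(X)\|^2\,\d t + \tfrac{p(p-2)}{2}|X|^{p-4}|\sigma(X)^*X|^2\,\d t + p|X|^{p-2}\langle X,\sigma(X)\,\d W\rangle .
\]
By (A1), $\langle X,b(X)\rangle\le L(1+|X|^2)$ and $\|\sigma(X)\|^2\le L(1+|X|^2)$, and $|\sigma(X)^*X|^2\le |X|^2\|\sigma(X)\|^2\le L|X|^2(1+|X|^2)$, so the drift terms are bounded by $C_p(1+|X|^p)$. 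Taking expectations kills the martingale term and Gronwall's inequality gives $\sup_{t\le T}\mathbb{E}|X_R(t\wedge\tau_R)|^p\le C_p$ with $C_p$ independent of $R$. Letting $R\to\infty$ via Fatou shows $\mathbb{E}|X(t)|^p\le C_p$ for all $t\le T$ and, crucially, $\mathbb{P}(\tau_R\le T)\le C_p R^{-p}\to 0$, which gives $\tau_\infty=\infty$ and completes the existence claim. Finally, to upgrade the pointwise-in-$t$ bound to the bound on $\mathbb{E}\sup_{t\le T}|\cdot|^p$, I would not take expectations before taking the supremum: instead apply Burkholder--Davis--Gundy to the stochastic integral $\int_0^t p|X|^{p-2}\langle X,\sigma(X)\,\d W\rangle$, bounding its quadratic variation by $C_p\int_0^t |X|^{2p-2}\|\sigma(X)\|^2\,\d s\le C_p\int_0^t(1+|X|^{2p})\,\d s$, then using Young's inequality to absorb the resulting $\mathbb{E}\sup_{s\le t}|X(s)|^p$ term (with a small coefficient) into the left-hand side, leaving $\int_0^t\mathbb{E}\sup_{u\le s}|X(u)|^p\,\d s$ for Gronwall.

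The main obstacle, and the only place real care is needed, is the passage from the \emph{local} hypothesis (A2) to a genuinely well-posed \emph{global} truncated equation: one must check that $b\circ\pi_R$ is one-sided Lipschitz (not merely locally so) and that $\sigma\circ\pi_R$ is globally Lipschitz, which requires the geometric fact that the metric projection $\pi_R$ onto the closed ball is $1$-Lipschitz and that $\langle x-y,\,b(\pi_R x)-b(\pi_R y)\rangle\le M_R|\pi_R x-\pi_R y|^2\le M_R|x-y|^2$ — the first inequality being (A2) applied at the projected points (both of norm $\le R$), the second being nonexpansivity of the projection. Everything after that (It\^o, BDG, Young, Gronwall, Fatou) is routine; the non-explosion estimate $\mathbb{P}(\tau_R\le T)\le C_pR^{-p}$ is what ties the two halves together, so it is worth stating it explicitly.
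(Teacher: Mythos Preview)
The paper itself omits this proof as ``standard'' (it is essentially Krylov's result \cite{KR}), so there is no argument to compare against; your moment-bound portion via It\^o's formula, (A1), BDG, Young, and Gronwall is correct and is exactly the standard route, and you are right that the non-explosion estimate $\mathbb{P}(\tau_R\le T)\le C_pR^{-p}$ is the hinge between the two halves.

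There is, however, a real gap precisely where you flag ``the main obstacle.'' You assert
\[
\langle x-y,\,b(\pi_R x)-b(\pi_R y)\rangle \le M_R|\pi_R x-\pi_R y|^2,
\]
calling this ``(A2) applied at the projected points.'' But (A2) at the projected points only gives $\langle \pi_R x-\pi_R y,\,b(\pi_R x)-b(\pi_R y)\rangle\le M_R|\pi_R x-\pi_R y|^2$; the vectors $x-y$ and $\pi_Rx-\pi_Ry$ differ in general, and the residual term $\langle (x-\pi_Rx)-(y-\pi_Ry),\,b(\pi_Rx)-b(\pi_Ry)\rangle$ has no sign. A two-dimensional counterexample: take $b(x)=Ax$ with $A=\bigl(\begin{smallmatrix}0&-1\\1&0\end{smallmatrix}\bigr)$, so $\langle u,Au\rangle\equiv 0$ and (A2) holds with $M_R=0$; with $R=1$, $x=(2,0)$, $y=(\tfrac{1}{\sqrt2},\tfrac{1}{\sqrt2})$ one finds $\langle x-y,\,b(\pi_Rx)-b(\pi_Ry)\rangle=\tfrac{1}{\sqrt2}>0$. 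Hence $b\circ\pi_R$ is \emph{not} globally one-sided Lipschitz with the constant you claim, and your truncated SDE is not covered by the monotone-operator existence theory you invoke. The fix is to truncate multiplicatively rather than by radial projection: set $b_R(x)=\zeta_R(x)\,b(x)$ with a smooth cutoff $\zeta_R$ equal to $1$ on $\{|x|\le R-1\}$ and vanishing for $|x|\ge R$. The computation in Lemma~\ref{globalone} of the paper then shows $b_R$ is globally one-sided Lipschitz with constant $M_R+C_{\zeta_R}L(1+R^{l})$, and since $b_R=b$ on $\{|x|\le R-1\}$ your patching-and-exit-time argument goes through unchanged.
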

Since the proof of this lemma is standard, we omit it here.

We now introduce $\theta$-EM scheme for \eqref{brownian}. Given any step size $\Delta$, define a partition $\{t_k=k\Delta:k=0,1,2,\cdots\}$ of the half line $t\ge 0$, let $y_0=x_0$ and define
\begin{equation}\label{discrete}
\begin{split}
y_{t_{k+1}}=&y_{t_k}+\theta b(y_{t_{k+1}})\Delta+(1-\theta) b(y_{t_{k}})\Delta+\sigma(y_{t_{k}})\Delta W_{t_k},
\end{split}
\end{equation}
where $\Delta W_{t_k}=W(t_{k+1})-W(t_k)$. Here $\theta\in [0,1]$ is an additional parameter that allows us to control the implicitness of the numerical scheme. For $\theta=0$, the $\theta$-EM scheme reduces to the EM scheme, and for $\theta=1$, it is  the backward EM scheme. Since the  $\theta$-EM scheme is an implicit method, we must guarantee that \eqref{discrete} is well defined. Generally speaking, for a given $y_{t_k}$,  to guarantee a unique solution $y_{t_{k+1}}$ for \eqref{discrete} is to assume that there exists a positive constant $M$ such that
\begin{equation*}
\langle x-y,b(x)-b(y)\rangle\le M|x-y|^2,
\end{equation*}
according to the theory of monotone operators (see \cite{zei} for more details). Moreover, as shown in Mao and Szpruch \cite{ms13}, this condition is somehow hard to relax. While in our assumption (A2), $M_R$ may tend to $\infty$ as $R\rightarrow\infty$. That is,  it is not certain if $\theta$-EM scheme \eqref{discrete} is well-defined under (A2). In the following, we will provide a new implicit numerical scheme, i.e.,  we will modify the $\theta$-EM scheme with a truncated method.

Choose a number $\Delta_1\in(0,1]$ and a strictly decreasing function $g:(0,\Delta_1]\rightarrow(0,\infty)$ such that for any $\Delta\in(0,\Delta_1]$,
\begin{equation}\label{gdelta}
g(\Delta_1)\ge 2,~~~~\lim\limits_{\Delta\rightarrow 0}g(\Delta)=\infty,~~M_{g(\Delta)}e^{M_{g(\Delta)}}\Delta^{\frac{1}{4}}\le 1,~~ {\rm and} (g(\Delta))^l\Delta\le 1.
\end{equation}

\begin{rem}
{\rm Condition \eqref{gdelta} can be satisfied. For example, let $g(\Delta)=\ln\Delta^{-\frac{1}{16}}$, obviously, $g(\Delta)$ is strictly decreasing and tends to $\infty$ as $\Delta\rightarrow0$. Set $\Delta_1=e^{-32}$, $M_{g(\Delta)}=g(\Delta)$, then for $\Delta\in(0,e^{-32}]$, $g(\Delta)\ge2$ and $M_{g(\Delta)}e^{M_{g(\Delta)}}\Delta^{\frac{1}{4}}\le\Delta^{\frac{1}{8}}< 1$ and $(g(\Delta))^l\Delta\le 1$.
}
\end{rem}

For $\Delta\in(0,\Delta_1]$, define a smooth, non-negative function with compact support $\zeta_{\Delta}\in\mathcal{C}_c^\infty(\mathbb{R}^n)$such that
\begin{equation*}
\zeta_\Delta(x)=\left\{
\begin{array}{lll}
&1,~~&{\rm for}~|x|\le g(\Delta)-1,\\
&0,~~&{\rm for}~|x|>g(\Delta).
\end{array}
\right.
\end{equation*}
and $\zeta_{\Delta}(x)\le 1$ for all $x\in\mathbb{R}^n$. It is obvious that $\zeta_{\Delta}(x)$ is Lipschitz with some constant $C_{\zeta_{\Delta}}$.

Define the following truncated functions
\begin{equation*}
b_\Delta(x)=\zeta_{\Delta}(x)b(x).
\end{equation*}
We have the following results:

\begin{lem}\label{lemmono}
{\rm Let (A1) hold, then for any $x\in\mathbb{R}^n$
\begin{equation*}
\langle x, b_\Delta(x)\rangle\le L(1+|x|^2)
\mbox{ and }
|b_\Delta(x)|\le L(1+|x|^l).
\end{equation*}

}
\end{lem}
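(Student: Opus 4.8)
The plan is to prove each of the two inequalities directly from the definition $b_\Delta(x) = \zeta_\Delta(x) b(x)$, exploiting that $0 \le \zeta_\Delta(x) \le 1$ and that $\zeta_\Delta$ vanishes for $|x| > g(\Delta)$.

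For the first inequality, $\langle x, b_\Delta(x)\rangle = \zeta_\Delta(x)\langle x, b(x)\rangle$. I would split into two cases according to the sign of $\langle x, b(x)\rangle$. When $\langle x, b(x)\rangle \ge 0$, since $0 \le \zeta_\Delta(x) \le 1$ we have $\zeta_\Delta(x)\langle x, b(x)\rangle \le \langle x, b(x)\rangle \le L(1+|x|^2)$ by (A1). When $\langle x, b(x)\rangle < 0$, then $\zeta_\Delta(x)\langle x, b(x)\rangle \le 0 \le L(1+|x|^2)$ trivially since $L>0$. Either way the bound holds for all $x\in\mathbb{R}^n$.

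For the second inequality, $|b_\Delta(x)| = \zeta_\Delta(x)|b(x)| \le |b(x)| \le L(1+|x|^l)$ again using $0\le \zeta_\Delta(x)\le 1$ and the growth bound in (A1). This is immediate.

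There is essentially no obstacle here; the only point requiring a moment's care is the first inequality, where one cannot simply write $\zeta_\Delta(x)L(1+|x|^2) \le L(1+|x|^2)$ unless one already knows $\langle x,b(x)\rangle$ is nonnegative, which is why the sign split above is the cleanest route. (One could alternatively note that $\langle x, b(x)\rangle \le L(1+|x|^2)$ with $L(1+|x|^2) \ge 0$, so multiplying the true statement $\langle x,b(x)\rangle \le L(1+|x|^2)$ by $\zeta_\Delta(x)\in[0,1]$ gives $\zeta_\Delta(x)\langle x,b(x)\rangle \le \zeta_\Delta(x)L(1+|x|^2) \le L(1+|x|^2)$ only when $\zeta_\Delta(x)\langle x,b(x)\rangle \le \zeta_\Delta(x)L(1+|x|^2)$, which does hold since $\zeta_\Delta(x)\ge 0$ — so in fact the one-line argument works directly without a case split, because $\zeta_\Delta(x)\ge0$ preserves the inequality and then $\zeta_\Delta(x)\le 1$ together with $L(1+|x|^2)\ge0$ finishes it.) I would present the short direct version.
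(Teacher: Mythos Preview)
Your proposal is correct. The paper states this lemma without proof (it is treated as immediate), so there is nothing to compare against; your direct argument using $0\le\zeta_\Delta(x)\le 1$ together with (A1) is exactly the intended one-line verification, and the observation that $\zeta_\Delta(x)\ge 0$ preserves the inequality $\langle x,b(x)\rangle\le L(1+|x|^2)$ while $\zeta_\Delta(x)\le 1$ and $L(1+|x|^2)\ge 0$ then give the bound is the clean way to avoid the case split.
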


\begin{lem}\label{globalone}
{\rm Let (A1)-(A2) hold, then for any $x,y\in\mathbb{R}^n$,
\begin{equation*}
\langle x-y,b_\Delta(x)-b_\Delta(y)\rangle\le \bar{M}_{g(\Delta)}|x-y|^2,
\end{equation*}
where $\bar{M}_{g(\Delta)}=M_{g(\Delta)}+C_{\zeta_{\Delta}}L[1+(g(\Delta))^l]$.
}
\end{lem}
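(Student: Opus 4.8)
The plan is to split the inner product $\langle x-y, b_\Delta(x)-b_\Delta(y)\rangle$ by inserting and subtracting a cross term so that one piece is controlled by the local one-sided Lipschitz condition (A2) and the other by the Lipschitz continuity of the cut-off $\zeta_\Delta$ together with the growth bound in (A1). Concretely, I would write
\begin{equation*}
b_\Delta(x)-b_\Delta(y)=\zeta_\Delta(x)\bigl(b(x)-b(y)\bigr)+\bigl(\zeta_\Delta(x)-\zeta_\Delta(y)\bigr)b(y),
\end{equation*}
so that
\begin{equation*}
\langle x-y, b_\Delta(x)-b_\Delta(y)\rangle=\zeta_\Delta(x)\langle x-y,b(x)-b(y)\rangle+\bigl(\zeta_\Delta(x)-\zeta_\Delta(y)\bigr)\langle x-y,b(y)\rangle.
\end{equation*}

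For the first term, note that $\zeta_\Delta(x)=0$ whenever $|x|>g(\Delta)$, so the term is nonzero only when $|x|\le g(\Delta)$. In that case I still need $|y|$ bounded to invoke (A2); the standard trick is that if $|y|>g(\Delta)$ then $|x-y|$ is comparable to $|y|$, but here it is cleaner to observe that we only ever need the bound to hold for \emph{all} $x,y$, and when $|y|>g(\Delta)$ one can dominate $\langle x-y, b(x)-b(y)\rangle$ differently — or, more simply, symmetrize. Actually the slickest route: since $0\le\zeta_\Delta\le 1$, and using (A2) with $R=g(\Delta)$ when both $|x|,|y|\le g(\Delta)$, while when $\max(|x|,|y|)>g(\Delta)$ one of the two $\zeta_\Delta$ values vanishes and the whole analysis reduces to the second (boundary) term, I get $\zeta_\Delta(x)\langle x-y,b(x)-b(y)\rangle\le M_{g(\Delta)}|x-y|^2$ on the region where this term survives. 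For the second term, use Cauchy–Schwarz, the Lipschitz bound $|\zeta_\Delta(x)-\zeta_\Delta(y)|\le C_{\zeta_\Delta}|x-y|$, and the growth bound $|b(y)|\le L(1+|y|^l)$ from (A1); since $(\zeta_\Delta(x)-\zeta_\Delta(y))b(y)$ is nonzero only if $|y|\le g(\Delta)$ (otherwise $\zeta_\Delta(y)=0$ and also $\zeta_\Delta(x)$ will typically... ) — the point is that this term vanishes unless $|y|\le g(\Delta)$, giving $|b(y)|\le L(1+(g(\Delta))^l)$, hence the second term is at most $C_{\zeta_\Delta}L[1+(g(\Delta))^l]\,|x-y|^2$. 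Adding the two bounds yields $\bar M_{g(\Delta)}|x-y|^2$ with $\bar M_{g(\Delta)}=M_{g(\Delta)}+C_{\zeta_\Delta}L[1+(g(\Delta))^l]$.

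The main obstacle is the bookkeeping on which region each term is supported and making sure the constant in (A2) is applied with the correct radius: one must be careful that when $|x|\le g(\Delta)$ but $|y|$ is large, the first term is not directly covered by (A2), so the argument must either restrict to the case $|y|\le g(\Delta)$ (handling $|y|>g(\Delta)$ separately, where $\zeta_\Delta(x)$ may still be nonzero if $|x|\le g(\Delta)-1$) or exploit a symmetric decomposition. I expect the cleanest fix is to write the decomposition symmetrically, e.g. averaging the expansion that peels off $\zeta_\Delta(x)$ with the one that peels off $\zeta_\Delta(y)$, so that the "good" term always has both arguments inside the ball of radius $g(\Delta)$ and the "boundary" term always has the factor $b(\cdot)$ evaluated at a point inside that ball; everything else is a routine application of Cauchy–Schwarz and the stated growth and Lipschitz estimates.
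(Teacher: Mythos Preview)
Your decomposition and overall strategy match the paper's, but there is a concrete gap in the mixed case $|x|\le g(\Delta)$, $|y|>g(\Delta)$ (and its mirror). Your claim that $(\zeta_\Delta(x)-\zeta_\Delta(y))b(y)$ vanishes when $|y|>g(\Delta)$ is false: in that region $\zeta_\Delta(y)=0$, so the factor reduces to $\zeta_\Delta(x)b(y)$, which need not vanish and carries $b(y)$ evaluated at an unbounded point. Likewise the first term $\zeta_\Delta(x)\langle x-y,b(x)-b(y)\rangle$ is not covered by (A2) with $R=g(\Delta)$ there. The symmetric averaging you propose at the end does not repair this: averaging the two expansions still leaves a factor $\tfrac12(\zeta_\Delta(x)+\zeta_\Delta(y))\langle x-y,b(x)-b(y)\rangle$ that survives in the mixed case and still sits outside the range of (A2), and the boundary piece still contains $b(y)$ with $|y|>g(\Delta)$.

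The paper's fix is exactly the case-by-case treatment you mention in passing but do not carry out. In the mixed case $|x|\le g(\Delta)$, $|y|>g(\Delta)$ one has $b_\Delta(y)=0$ and $\zeta_\Delta(y)=0$, so
\[
\langle x-y,\,b_\Delta(x)-b_\Delta(y)\rangle=\langle x-y,\,\zeta_\Delta(x)b(x)\rangle=\langle x-y,\,(\zeta_\Delta(x)-\zeta_\Delta(y))b(x)\rangle,
\]
and now only $b(x)$ with $|x|\le g(\Delta)$ appears; the Lipschitz bound on $\zeta_\Delta$ together with $|b(x)|\le L[1+(g(\Delta))^l]$ gives $C_{\zeta_\Delta}L[1+(g(\Delta))^l]\,|x-y|^2$. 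In other words, in the mixed case you should peel off $\zeta_\Delta(y)$ rather than $\zeta_\Delta(x)$; the ``symmetrization'' you are reaching for is really \emph{choosing} the expansion adapted to whichever point lies inside the ball, not averaging the two. With that correction the three cases (both inside; one inside and one outside; both outside, which is trivial) assemble exactly as in the paper and give the stated constant $\bar M_{g(\Delta)}$.
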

\begin{proof}
{\bf Case 1:} First, we assume  $|x|\le g(\Delta)$ and $|y|\le g(\Delta)$, then
$0 \le \zeta_{\Delta}(x)\le 1, 0 \le \zeta_{\Delta}(y)\le 1$.  Note that
\begin{equation*}
\begin{split}
&\langle x-y, b_\Delta(x)-b_\Delta(y)\rangle=\langle x-y, \zeta_{\Delta}(x)b(x)-\zeta_{\Delta}(y)b(y)\rangle\\
=&\zeta_{\Delta}(x)\langle x-y,b(x)-b(y)\rangle+\langle x-y, (\zeta_{\Delta}(x)-\zeta_{\Delta}(y))b(y)\rangle\\
=:&I_1+I_2.
\end{split}
\end{equation*}
The  local one-sided Lipschitz condition (A2) implies
\begin{equation*}
\begin{split}
I_1\le M_{g(\Delta)}|x-y|^2.
\end{split}
\end{equation*}
Since for $|y|\le g(\Delta)$, by (A1) we have $|b(y)|\le L[1+(g(\Delta))^l]$, $\zeta_{\Delta}$ is also Lipschitz, thus we have
\begin{equation*}
\begin{split}
I_2\le C_{\zeta_{\Delta}}L[1+(g(\Delta))^l]|x-y|^2.
\end{split}
\end{equation*}
Therefore $b_\Delta$ satisfies the global one-sided Lipschitz condition with constant
\begin{equation*}
\begin{split}
\bar{M}_{g(\Delta)}:=M_{g(\Delta)}+C_{\zeta_{\Delta}}L[1+(g(\Delta))^l].
\end{split}
\end{equation*}
{\bf Case 2:} $|x|\le g(\Delta)$ and $|y|> g(\Delta),$  we have
\begin{equation*}
\begin{split}
&\langle x-y, b_\Delta(x)-b_\Delta(y)\rangle=\langle x-y, \zeta_{\Delta}(x)b(x)\rangle\\
&=\langle x-y, (\zeta_{\Delta}(x)-\zeta_{\Delta}(y))b(x)\rangle\le  C_{\zeta_{\Delta}}L[1+(g(\Delta))^l]|x-y|^2.
\end{split}
\end{equation*}
This means $b_\Delta$ satisfies the global one-sided Lipschitz condition with constant
\begin{equation*}
\begin{split}
\bar{M}_{g(\Delta)}:=C_{\zeta_{\Delta}}L[1+(g(\Delta))^l].
\end{split}
\end{equation*}
{\bf Case 3:} $|x|> g(\Delta)$ and $|y|> g(\Delta).$  This is trivial, since
\begin{equation*}
\begin{split}
\langle x-y, b_\Delta(x)-b_\Delta(y)\rangle=0.
\end{split}
\end{equation*}
The proof is therefore complete.
\end{proof}

Now we define the corresponding modified $\theta$-EM scheme as follows:
\begin{equation}\label{discrete22}
\begin{split}
y_{t_{k+1}}=&y_{t_k}+\theta b_\Delta(y_{t_{k+1}})\Delta+(1-\theta) b_\Delta(y_{t_{k}})\Delta+\sigma(y_{t_{k}})\Delta W_{t_k}.
\end{split}
\end{equation}
By \eqref{gdelta} and Lemma \ref{globalone}, the theory of monotone operators implies that \eqref{discrete22} is well defined.
Due to the implicitness of $\theta$-EM scheme, we also require $\theta\Delta<\frac{1}{2 L}$, where $L$ is defined as in assumption (A1). Thus in the following sections,  we will set a $\Delta_2\in\left(0,\frac{1}{2 \theta L}\right)$, and choose the stepsize such that for $\theta=0,$ let  $\Delta\in(0,\Delta_1]$, for $\theta\in (0, 1]$,  let $\Delta\in(0,\Delta_1\wedge\Delta_2]$.

We find it is convenient to work with a continuous form of a numerical method. Fix any $T>0$, denote $M=\lfloor\frac{T}{\Delta}\rfloor$. Let $Y_\Delta(t)$ denote  the corresponding continuous form of $y_{t_k}$ such that $Y_\Delta(0)=x_0$, and for any $t\in[0,T]$, define
\begin{equation}\label{continuous}
\mbox{d}[Y_\Delta(t)-\theta b_\Delta(Y_\Delta(t))\Delta]=b_\Delta(\bar{Y}_\Delta(t))\mbox{d}t+\sigma(\bar{Y}_\Delta(t))\mbox{d}W(t),
\end{equation}
where $\bar{Y}_\Delta(t)$ is defined by
\begin{equation*}
\bar{Y}_\Delta(t):=y_{t_k} \quad \mbox{for} \quad t\in[t_k, t_{k+1}).
\end{equation*}
 It can be verified that $Y_\Delta(t_k)=y_{t_k}$, $k=0,1,\cdots, M$.

\begin{lem}\label{pmoment}
{\rm Let (A1) hold. Then for $\theta\in[\frac{1}{2},1]$, there exists a positive constant $C$ independent of $\Delta$  such that for $p\ge2$,
\begin{equation*}
\begin{split}
\mathbb{E}\left(\sup\limits_{0\le k\le M}|y_{t_k}|^{p}\right)\le C.
\end{split}
\end{equation*}
}
\end{lem}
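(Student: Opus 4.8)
The plan is to derive the $p$-th moment bound for the modified $\theta$-EM scheme \eqref{discrete22} with $\theta\in[\tfrac12,1]$ by exploiting the implicitness to absorb the drift term. First I would take inner products in \eqref{discrete22}: writing $y_{t_{k+1}} - \theta b_\Delta(y_{t_{k+1}})\Delta = y_{t_k} + (1-\theta)b_\Delta(y_{t_k})\Delta + \sigma(y_{t_k})\Delta W_{t_k} =: z_k$, one has $|z_k|^2 = |y_{t_{k+1}}|^2 - 2\theta\Delta\langle y_{t_{k+1}}, b_\Delta(y_{t_{k+1}})\rangle + \theta^2\Delta^2|b_\Delta(y_{t_{k+1}})|^2 \ge |y_{t_{k+1}}|^2 - 2\theta\Delta\langle y_{t_{k+1}}, b_\Delta(y_{t_{k+1}})\rangle$. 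Now apply Lemma \ref{lemmono} to bound $\langle y_{t_{k+1}}, b_\Delta(y_{t_{k+1}})\rangle \le L(1+|y_{t_{k+1}}|^2)$, which gives
\begin{equation*}
|y_{t_{k+1}}|^2 \le |z_k|^2 + 2\theta L\Delta(1+|y_{t_{k+1}}|^2),
\end{equation*}
and since $\theta\Delta < \tfrac{1}{2L}$ (by the choice $\Delta\in(0,\Delta_1\wedge\Delta_2]$), the factor $1 - 2\theta L\Delta$ is positive and bounded away from $0$, so $|y_{t_{k+1}}|^2 \le (1-2\theta L\Delta)^{-1}(|z_k|^2 + 2\theta L\Delta)$.

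Next I would expand $|z_k|^2 = |y_{t_k} + (1-\theta)b_\Delta(y_{t_k})\Delta|^2 + 2\langle y_{t_k} + (1-\theta)b_\Delta(y_{t_k})\Delta, \sigma(y_{t_k})\Delta W_{t_k}\rangle + \|\sigma(y_{t_k})\|^2|\Delta W_{t_k}|^2$ plus a martingale-difference-type cross term. Using $\theta\ge\tfrac12$ so that $1-\theta\le\theta$ (hence $1-\theta\le 1$ and the explicit-drift contribution is controlled), and using Lemma \ref{lemmono} for both $\langle y_{t_k}, b_\Delta(y_{t_k})\rangle \le L(1+|y_{t_k}|^2)$ and $|b_\Delta(y_{t_k})|^2 \le L^2(1+|y_{t_k}|^l)^2$, together with $(g(\Delta))^l\Delta\le 1$ from \eqref{gdelta} to tame the superlinear $|b_\Delta|$ term (note $b_\Delta$ is supported on $|x|\le g(\Delta)$, so $|b_\Delta(y_{t_k})|\Delta \le L(1+(g(\Delta))^l)\Delta \le$ const), I would obtain a recursion of the form
\begin{equation*}
|y_{t_{k+1}}|^2 \le (1+C\Delta)|y_{t_k}|^2 + C\Delta + m_k,
\end{equation*}
where $m_k$ collects the $\F_{t_k}$-conditionally-mean-zero stochastic terms. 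Iterating, raising to the power $p/2$, and applying the Burkholder--Davis--Gundy inequality to the supremum of the martingale part (after taking expectations of the $\sup_{0\le k\le M}$), one closes the estimate by discrete Gronwall, giving the bound $C$ independent of $\Delta$ since $M\Delta\le T$.

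The main obstacle I anticipate is handling the superlinearly growing drift $b_\Delta$ in the cross terms and in $|z_k|^2$ without a one-sided structure to help there: unlike the pure $\langle y, b_\Delta(y)\rangle$ term, products like $\langle b_\Delta(y_{t_k})\Delta, \sigma(y_{t_k})\Delta W_{t_k}\rangle$ and $|b_\Delta(y_{t_k})|^2\Delta^2$ involve $|b_\Delta|$ directly, which can be as large as $L(1+(g(\Delta))^l)$. This is exactly where the truncation pays off: the constraint $(g(\Delta))^l\Delta\le 1$ ensures $|b_\Delta(y_{t_k})|\Delta$ is uniformly bounded, so $|b_\Delta(y_{t_k})|^2\Delta^2 \le \big(L(1+(g(\Delta))^l)\Delta\big)^2 \le C\Delta^2 \cdot (g(\Delta))^l \cdot$(stuff) — more carefully, $|b_\Delta(y_{t_k})|^2\Delta^2 = (|b_\Delta(y_{t_k})|\Delta)(|b_\Delta(y_{t_k})|\Delta) \le 1\cdot |b_\Delta(y_{t_k})|\Delta \le L(1+|y_{t_k}|^l)\Delta$, and then one further interpolates $|y_{t_k}|^l$ against $|y_{t_k}|^2$ on the truncation region or simply keeps it under the sup and absorbs it, exploiting $l\ge 1$ and the fact that on $\{\zeta_\Delta\ne 0\}$ we have $|y_{t_k}|\le g(\Delta)$ with $(g(\Delta))^{l}\Delta\le1$. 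A careful bookkeeping of these truncation-dependent constants, showing they all collapse to $\Delta$-independent quantities after using \eqref{gdelta}, is the technical heart of the argument; everything else is the standard Gronwall-plus-BDG machinery.
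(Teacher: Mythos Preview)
Your plan has a genuine gap in the handling of the term $(1-\theta)^2|b_\Delta(y_{t_k})|^2\Delta^2$ that appears when you expand $|z_k|^2=|y_{t_k}+(1-\theta)b_\Delta(y_{t_k})\Delta+\sigma(y_{t_k})\Delta W_{t_k}|^2$. The truncation condition $(g(\Delta))^l\Delta\le 1$ only yields $|b_\Delta(y_{t_k})|\Delta\le L\big(1+(g(\Delta))^l\big)\Delta\le 2L$, so $|b_\Delta(y_{t_k})|^2\Delta^2=O(1)$, \emph{not} $O(\Delta)$. Your attempted fix $|b_\Delta|^2\Delta^2\le C|b_\Delta|\Delta\le CL(1+|y_{t_k}|^l)\Delta$ still leaves a superlinear $|y_{t_k}|^l\Delta$ increment; interpolating against $|y_{t_k}|^2$ on the support costs a factor $(g(\Delta))^{l-2}$, which blows up for $l>2$ (the example in the paper has $l=3$). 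Either way the additive increment is not $O(\Delta(1+|y_{t_k}|^2))$, the sum over $M\sim T/\Delta$ steps diverges, and discrete Gronwall cannot close.

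The paper avoids this entirely, and this is precisely where the restriction $\theta\ge\tfrac12$ is used. Set $z_{t_k}:=y_{t_k}-\theta b_\Delta(y_{t_k})\Delta$ at \emph{every} time level, so that the scheme reads $z_{t_{k+1}}=z_{t_k}+b_\Delta(y_{t_k})\Delta+\sigma(y_{t_k})\Delta W_{t_k}$. Expanding $|z_{t_{k+1}}|^2-|z_{t_k}|^2$ and substituting $z_{t_k}=y_{t_k}-\theta b_\Delta(y_{t_k})\Delta$ produces the superlinear term with coefficient $(1-2\theta)\le 0$, so it is simply dropped; the cross term $\langle (1-\theta)b_\Delta(y_{t_k})\Delta,\sigma(y_{t_k})\Delta W_{t_k}\rangle$ is rewritten via the identity $b_\Delta(y_{t_k})\Delta=\theta^{-1}(y_{t_k}-z_{t_k})$, so that $|b_\Delta|$ never appears bare in the martingale part either. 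After summing, raising to the $p$-th power, and applying BDG, one obtains a recursion for $\mathbb{E}\sup_k|z_{t_k}|^{2p}$ in terms of $\sum_i\mathbb{E}|y_{t_i}|^{2p}$; the inequality $|y_{t_k}|^2\le(1-2\theta L\Delta)^{-1}(|z_{t_k}|^2+2\theta L\Delta)$ (which you already have) converts back and discrete Gronwall finishes. Note that the truncation condition $(g(\Delta))^l\Delta\le 1$ plays no role in this lemma; the moment bound comes purely from (A1) and the sign cancellation afforded by $\theta\ge\tfrac12$.
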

\begin{proof}
Denote $z_{t_k}=y_{t_k}-\theta b_\Delta(y_{t_k})\Delta$, by \eqref{discrete22}, we deduce that
\begin{equation*}
\begin{split}
|z_{t_{k+1}}|^2=&|z_{t_k}|^2+2\langle z_{t_k},b_\Delta(y_{t_k})\Delta\rangle+|b_\Delta(y_{t_k})|^2\Delta^2+|\sigma(y_{t_k})\Delta W_{t_k}|^2\\
&+2\langle z_{t_k}+b_\Delta(y_{t_k})\Delta,\sigma(y_{t_k})\Delta W_{t_k}\rangle\\
=&|z_{t_k}|^2+2\langle y_{t_k},b_\Delta(y_{t_k})\Delta\rangle+(1-2\theta)|b_\Delta(y_{t_k})|^2\Delta^2+|\sigma(y_{t_k})\Delta W_{t_k}|^2\\
&+2\langle y_{t_k}+(1-\theta)b_\Delta(y_{t_k})\Delta,\sigma(y_{t_k})\Delta W_{t_k}\rangle.
\end{split}
\end{equation*}
Noting that $\theta\ge\frac{1}{2}$ and  $b_\Delta(y_{t_k})\Delta=\frac{1}{\theta}(y_{t_k}-z_{t_k})$, and using Lemma \ref{lemmono} yields
\begin{equation*}
\begin{split}
|z_{t_{k+1}}|^2\le&|z_{t_k}|^2+2\Delta\langle y_{t_k},b_\Delta(y_{t_k})\rangle+|\sigma(y_{t_k})\Delta W_{t_k}|^2\\
&+\frac{2}{\theta}\langle y_{t_k},\sigma(y_{t_k})\Delta W_{t_k}\rangle-2\frac{1-\theta}{\theta}\langle z_{t_k},\sigma(y_{t_k})\Delta W_{t_k}\rangle\\
\le&|z_{t_k}|^2+2L\Delta(1+|y_{t_k}|^2)+|\sigma(y_{t_k})\Delta W_{t_k}|^2\\
&+\frac{2}{\theta}\langle y_{t_k},\sigma(y_{t_k})\Delta W_{t_k}\rangle-2\frac{1-\theta}{\theta}\langle z_{t_k},\sigma(y_{t_k})\Delta W_{t_k}\rangle.
\end{split}
\end{equation*}
Summing both sides from 0 to $k$, we get
\begin{equation*}\label{ztk-Dztk}
\begin{split}
|z_{t_{k+1}}|^2\le&|z_{t_0}|^2+2LT+2L\Delta\sum\limits_{i=0}^k|y_{t_i}|^2+\sum\limits_{i=0}^k|\sigma(y_{t_i})\Delta W_{t_i}|^2\\
&+\frac{2}{\theta}\sum\limits_{i=0}^k\langle y_{t_i},\sigma(y_{t_i})\Delta W_{t_i}\rangle-2\frac{1-\theta}{\theta}\sum\limits_{i=0}^k\langle z_{t_i},\sigma(y_{t_i})\Delta W_{t_i}\rangle.
\end{split}
\end{equation*}
By the elementary inequality $\left|\sum\limits_{i=1}^nx_i\right|^p\le n^{p-1}\sum\limits_{i=1}^n|x_i|^p, \,  p\ge 1$, we have
\begin{equation}\label{visa}
\begin{split}
&|z_{t_{k+1}}|^{2p}\le5^{p-1}2^pL^p\Delta^p\left(\sum\limits_{i=0}^k|y_{t_i}|^2\right)^{p}+5^{p-1}\left(\sum\limits_{i=0}^k|\sigma(y_{t_i})\Delta W_{t_i}|^2\right)^p\\
&+5^{p-1}4^p\left|\sum\limits_{i=0}^k\langle y_{t_i},\sigma(y_{t_i})\Delta W_{t_i}\rangle\right|^p
+5^{p-1}2^p\left|\sum\limits_{i=0}^k\langle z_{t_i},\sigma(y_{t_i})\Delta W_{t_i}\rangle\right|^p +5^{p-1}(|z_{t_0}|^2+2LT)^{p}.
\end{split}
\end{equation}
For $0<j<M$, it is easy to observe that
\begin{equation*}
\begin{split}
\mathbb{E}\left[\sup\limits_{0\le k\le j}\left(\sum\limits_{i=0}^k|y_{t_i}|^2\right)^{p}\right]\le M^{p-1}\sum\limits_{i=0}^j\mathbb{E}|y_{t_i}|^{2p}.
\end{split}
\end{equation*}
By Lemma \ref{lemmono}, we compute
\begin{equation*}
\begin{split}
&\mathbb{E}\left[\sup\limits_{0\le k\le j}\left(\sum\limits_{i=0}^k|\sigma(y_{t_i})\Delta W_{t_i}|^2\right)^p\right]
\le  M^{p-1}\mathbb{E}\left(\sum\limits_{i=0}^j\|\sigma(y_{t_i})\|^{2p}|\Delta W_{t_i}|^{2p}\right)\\
\le& M^{p-1}\sum\limits_{i=0}^j\mathbb{E}\|\sigma(y_{t_i})\|^{2p}\mathbb{E}|\Delta W_{t_i}|^{2p}
\le(2p-1)!!M^{p-1}L^p\Delta^p\sum\limits_{i=0}^j\mathbb{E}(1+|y_{t_i}|^2)^{p}\\
\le&(2p-1)!!(2ML)^{p}\Delta^{p}+2(2p-1)!!(2M)^{p-1}L^p\Delta^p\sum\limits_{i=0}^j\mathbb{E}|y_{t_i}|^{2p}.
\end{split}
\end{equation*}
With Lemma \ref{lemmono}, the Young inequality and the Burkholder-Davis-Gundy (BDG) inequality, we arrive at
\begin{equation*}
\begin{split}
&\mathbb{E}\left[\sup\limits_{0\le k\le j}\left|\sum\limits_{i=0}^k\langle y_{t_i},\sigma(y_{t_i})\Delta W_{t_i}\rangle\right|^p\right]
\le c_1(p)\mathbb{E}\left(\sum\limits_{i=0}^j|y_{t_i}|^2\|\sigma(y_{t_i})\|^{2}\Delta\right)^{\frac{p}{2}}\\
\le& c_1(p)M^{\frac{p}{2}-1}L^{\frac{p}{2}}\Delta^{\frac{p}{2}}\mathbb{E}\sum\limits_{i=0}^j|y_{t_i}|^p(1+|y_{t_i}|^2)^{\frac{p}{2}}\\
\le&c_1(p)2^{p-2}(2M)^{\frac{p}{2}-1}L^{\frac{p}{2}}\Delta^{\frac{p}{2}}M
+5c_1(p)2^{p-2}(2M)^{\frac{p}{2}-1}L^{\frac{p}{2}}\Delta^{\frac{p}{2}}\sum\limits_{i=0}^j\mathbb{E}|y_{t_i}|^{2p}.
\end{split}
\end{equation*}
where $c_1(p)=\left[\frac{p^{p+1}}{2(p-1)^{p-1}}\right]^{p/2}$. Similarly, with Lemma \ref{lemmono}, the Young inequality and the BDG inequality again
\begin{equation*}
\begin{split}
&\mathbb{E}\left[\sup\limits_{0\le k\le j}\left|\sum\limits_{i=0}^k\langle z_{t_i},\sigma(y_{t_i})\Delta W_{t_i}\rangle\right|^p\right]
\le c_1(p)\mathbb{E}\left(\sum\limits_{i=0}^j|z_{t_i}|^2\|\sigma(y_{t_i})\|^{2}\Delta\right)^{\frac{p}{2}}\\
\le&c_1(p)M^{\frac{p}{2}-1}L^{\frac{p}{2}}\Delta^{\frac{p}{2}}\mathbb{E}\left(\sup\limits_{0\le k\le j+1}|z_{t_k}|^p\sum\limits_{i=0}^j(1+|y_{t_i}|^2)^{\frac{p}{2}}\right)\\
\le&\frac{1}{2}\mathbb{E}\left(\sup\limits_{0\le k\le j+1}|z_{t_k}|^{2p}\right)+\frac{1}{4}c^2_1(p)(2M)^{p-1}L^{p}\Delta^{p}
+\frac{1}{2}c^2_1(p)(2M)^{p-1}L^{p}\Delta^{p}\sum\limits_{i=0}^j\mathbb{E}|y_{t_i}|^{2p}.
\end{split}
\end{equation*}
Noting $M\Delta\le T$ and sorting this inequalities together, we derive from \eqref{visa} that
\begin{equation}\label{ztk1-Dztk1}
\begin{split}
\mathbb{E}\left(\sup\limits_{0\le k\le j+1}|z_{t_{k}}|^{2p}\right)
\le&C+C\sum\limits_{i=0}^j\mathbb{E}|y_{t_i}|^{2p}.
\end{split}
\end{equation}
Since $y_{t_k}=z_{t_k}+\theta b_\Delta(y_{t_k})\Delta$, we deduce from Lemma \ref{lemmono} that
\begin{equation}\label{what}
\begin{split}
|z_{t_{k}}|^{2}=&|y_{t_k}|^2+\theta^2\Delta^2|b_\Delta(y_{t_k})|^2-2\theta\Delta\langle y_{t_k},b_\Delta(y_{t_k})\rangle\\
\ge&|y_{t_k}|^2-2\theta L\Delta(1+|y_{t_k}|^2)=(1-2\theta L\Delta)|y_{t_k}|^2-2\theta L\Delta,
\end{split}
\end{equation}
this implies
\begin{equation*}
\begin{split}
|y_{t_k}|^2\le&\frac{1}{1-2\theta L\Delta}(|z_{t_{k}}|^{2}+2\theta L\Delta).
\end{split}
\end{equation*}
Thus, we arrive at  by \eqref{ztk1-Dztk1}
\begin{equation*}
\begin{split}
\mathbb{E}\left(\sup\limits_{0\le k\le j+1}|y_{t_k}|^{2p}\right)\le C+C\mathbb{E}\left(\sup\limits_{0\le k\le j+1}|z_{t_{k}}|^{2p}\right)
\le C+C \sum_{k=0}^j\mathbb{E}\left(\sup\limits_{0\le i\le k}|y_{t_i}|^{2p}\right).
\end{split}
\end{equation*}
Finally, the desired result follows from the discrete Gronwall inequality.
\end{proof}

\begin{lem}\label{ytytk}
{\rm
Let (A1) hold. Then it holds that for any $p\ge 2$ and $\theta\in[\frac{1}{2},1]$,
\begin{equation*}
\mathbb{E}\left(\sup\limits_{0\le t \le T}|Y_\Delta(t)|^p\right)\le C,
\end{equation*}
and
\begin{equation*}
\mathbb{E}\left[\sup\limits_{0\le k\le M-1}\sup\limits_{t_k\le t<t_{k+1}}|Y_\Delta(t)-Y_\Delta(t_k)|^p\right]\le C\Delta^{\frac{p}{2}},
\end{equation*}
where $C$ is a constant independent of $\Delta$.
}
\end{lem}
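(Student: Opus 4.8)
The plan is to derive both estimates from the continuous interpolation \eqref{continuous}, read on a single mesh interval $[t_k,t_{k+1})$ as the identity
\begin{equation*}
Y_\Delta(t)=\theta\Delta\, b_\Delta(Y_\Delta(t))+Z_\Delta(t),\qquad Z_\Delta(t):=z_{t_k}+(t-t_k)b_\Delta(y_{t_k})+\sigma(y_{t_k})\big(W(t)-W(t_k)\big),
\end{equation*}
where $z_{t_k}=y_{t_k}-\theta\Delta\, b_\Delta(y_{t_k})$ as in the proof of Lemma \ref{pmoment}; note that $Y_\Delta(t_k)=y_{t_k}$ and, integrating \eqref{continuous} over $[t_k,t]$, that $Z_\Delta(t)=Y_\Delta(t)-\theta\Delta\, b_\Delta(Y_\Delta(t))$. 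The point is that $Z_\Delta$ is an explicit, continuous process whose size and increments can be bounded termwise using Lemma \ref{lemmono}, Lemma \ref{pmoment} and elementary martingale inequalities, while the single implicit term $\theta\Delta\, b_\Delta(Y_\Delta(t))$ is of lower order once the moment bound is in hand.

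For the first bound I would first dispose of the implicitness exactly as in the derivation of \eqref{what}: that computation uses only $\langle x,b_\Delta(x)\rangle\le L(1+|x|^2)$, and applied to $Z_\Delta(t)=Y_\Delta(t)-\theta\Delta\, b_\Delta(Y_\Delta(t))$ it gives $|Z_\Delta(t)|^2\ge(1-2\theta L\Delta)|Y_\Delta(t)|^2-2\theta L\Delta$. Since $\Delta\le\Delta_2<\tfrac{1}{2\theta L}$, the factor $1-2\theta L\Delta$ is bounded below by a positive constant, so $|Y_\Delta(t)|^p\le C\big(1+|Z_\Delta(t)|^p\big)$ and it remains to bound $\mathbb{E}\sup_{[0,T]}|Z_\Delta(t)|^p$. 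Splitting $Z_\Delta$ into its three pieces: $\sup_k|z_{t_k}|$ and $\Delta\sup_k|b_\Delta(y_{t_k})|$ are $O(1)$ in every $L^p$ by the polynomial growth $|b_\Delta(x)|\le L(1+|x|^l)$ (Lemma \ref{lemmono}) together with $\mathbb{E}\sup_k|y_{t_k}|^q\le C$ for all $q\ge2$ (Lemma \ref{pmoment}); and for the martingale piece I would dominate the maximum over $k$ by the sum over $k$, condition on $\mathscr{F}_{t_k}$ to separate $\mathbb{E}\|\sigma(y_{t_k})\|^p$ (bounded via $\|\sigma(x)\|^2\le L(1+|x|^2)$ and Lemma \ref{pmoment}) from $\mathbb{E}\sup_{0\le s\le\Delta}|W(s)|^p\le c_p\Delta^{p/2}$, and then use $M\Delta\le T$ to obtain a bound $\le c_pT\Delta^{p/2-1}\le c_pT$. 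This gives $\mathbb{E}\sup_{[0,T]}|Y_\Delta(t)|^p\le C$.

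For the increment estimate I would subtract $Y_\Delta(t_k)=y_{t_k}=z_{t_k}+\theta\Delta\, b_\Delta(y_{t_k})$ from the identity above to obtain, for $t\in[t_k,t_{k+1})$,
\begin{equation*}
Y_\Delta(t)-Y_\Delta(t_k)=\theta\Delta\, b_\Delta(Y_\Delta(t))+\big((t-t_k)-\theta\Delta\big)b_\Delta(y_{t_k})+\sigma(y_{t_k})\big(W(t)-W(t_k)\big).
\end{equation*}
Here $|\theta\Delta\, b_\Delta(Y_\Delta(t))|\le L\Delta(1+|Y_\Delta(t)|^l)$, so the $p$-th moment of its supremum is $O(\Delta^p)=O(\Delta^{p/2})$ by the first bound applied with exponent $lp$; the second term is $\le\Delta L(1+|y_{t_k}|^l)$ since $|(t-t_k)-\theta\Delta|\le\Delta$, hence again $O(\Delta^p)$ by Lemma \ref{pmoment}; and the stochastic term is treated as before — conditioning on $\mathscr{F}_{t_k}$, using the linear growth of $\sigma$, Lemma \ref{pmoment} and $\mathbb{E}\sup_{0\le s\le\Delta}|W(s)|^p\le c_p\Delta^{p/2}$ — to contribute $O(\Delta^{p/2})$ uniformly in $k$. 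Collecting the three contributions yields the bound $C\Delta^{p/2}$.

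The step I expect to be the real obstacle is the implicitness: the identity defining $Y_\Delta(t)$ carries the superlinearly growing $b_\Delta(Y_\Delta(t))$ on both sides, so one cannot apply a Lipschitz or growth estimate to it ``from the left''. The device that makes everything go through is to use nothing about $b_\Delta$ beyond the one-sided bound of Lemma \ref{lemmono}: in the moment estimate this lets $\theta\Delta\, b_\Delta(Y_\Delta(t))$ be absorbed into the term $(1-2\theta L\Delta)|Y_\Delta(t)|^2$, whose coefficient is positive precisely because of the stepsize restriction $\Delta<\Delta_2<\tfrac{1}{2\theta L}$, and in the increment estimate $\theta\Delta\, b_\Delta(Y_\Delta(t))$ becomes a genuine $O(\Delta)$ error once the moments of $Y_\Delta$ are known. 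A secondary, routine point is uniformity over the $\lfloor T/\Delta\rfloor$ subintervals, which costs nothing here because $\sigma$ has only linear growth and, on disjoint intervals, the Brownian increments are independent with the stationary scaling $\mathbb{E}\sup_{0\le s\le\Delta}|W(s)|^p\le c_p\Delta^{p/2}$.
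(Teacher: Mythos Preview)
Your proposal is correct and follows essentially the same route as the paper: both remove the implicitness by passing to $Z_\Delta(t)=Y_\Delta(t)-\theta\Delta\,b_\Delta(Y_\Delta(t))$, invoke the one-sided bound of Lemma~\ref{lemmono} together with the stepsize restriction $\Delta<1/(2\theta L)$ to recover $|Y_\Delta|$ from $|Z_\Delta|$, and then estimate $Z_\Delta$ (and its increments) termwise using the polynomial growth of $b_\Delta$, the linear growth of $\sigma$, and Lemma~\ref{pmoment}. The only cosmetic difference is that the paper writes $Z_\Delta$ in global integral form and applies the BDG inequality to $\int_0^t\sigma(\bar Y_\Delta)\,dW$, whereas you work interval-by-interval and control the stochastic piece by the sum-over-$k$ bound; both yield the same estimates.
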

\begin{proof}
Denote by $Z_\Delta(t)=Y_\Delta(t)-\theta b_\Delta(Y_\Delta(t))\Delta$, for any $p\ge 2$ we have
\begin{equation*}
\begin{split}
\mathbb{E}\left(\sup\limits_{0\le u\le t}|Z_\Delta(u)|^p\right)\le &3^{p-1}|Z_\Delta(0)|^p+3^{p-1}\mathbb{E}\left(\sup\limits_{0\le u\le t}\left|\int_0^u b_\Delta(\bar{Y}_\Delta(s))\mbox{d}s\right|^p\right)\\
&+3^{p-1}\mathbb{E}\left(\sup\limits_{0\le u\le t}\left|\int_{0}^u\sigma(\bar{Y}_\Delta(s))\mbox{d}W(s)\right|^p\right),
\end{split}
\end{equation*}
where $Z_\Delta(0)=x_0-\theta b_\Delta(x_0)\Delta$. Using the H\"{o}lder inequality, the BDG inequality, and together with Lemmas \ref{lemmono} and \ref{pmoment} yields
\begin{equation}\label{supcontin}
\begin{split}
\mathbb{E}\left(\sup\limits_{0\le u\le t}|Z_\Delta(u)|^p\right)
\le &3^{p-1}|Z_\Delta(0)|^p+3^{p-1}t^{p-1}\mathbb{E}\int_0^t\left|b_\Delta(\bar{Y}_\Delta(s))\right|^p\mbox{d}s\\
&+3^{p-1}c_1(p)\mathbb{E}\left(\int_{0}^t\|\sigma(\bar{Y}_\Delta(s))\|^2\mbox{d}s\right)^{\frac{p}{2}}\\
\le& C+C\mathbb{E}\int_0^t[1+|\bar{Y}_\Delta(s)|^p+|\bar{Y}_\Delta(s)|^{lp}]\mbox{d}s\le C.
\end{split}
\end{equation}
In the same way as in \eqref{what},  we derive
\begin{equation*}
\begin{split}
|Y_\Delta(t)|^2\le&\frac{1}{1-2\theta L\Delta}(|Z_\Delta(t)|^{2}+2\theta L\Delta).
\end{split}
\end{equation*}
Consequently, we derive from \eqref{supcontin} that
\begin{equation*}
\begin{split}
\mathbb{E}\left(\sup\limits_{0\le t\le T}|Y_\Delta(t)|^p\right)\le& C+C\mathbb{E}\left(\sup\limits_{0\le t\le T}|Z_\Delta(t)|^p\right)\le C.
\end{split}
\end{equation*}
We now give a proof for the second part. By \eqref{continuous}, we have
\begin{equation*}
\begin{split}
&\mathbb{E}\left(\sup\limits_{t_k\le t<t_{k+1}}|Z_\Delta(t)-Z_\Delta(t_k)|^p\right)\le2^{p-1}\mathbb{E}\left(\sup\limits_{t_k\le t<t_{k+1}}\left|\int_{t_k}^t b_\Delta(\bar{Y}_\Delta(s))\mbox{d}s\right|^p\right)\\
&+2^{p-1}\mathbb{E}\left(\sup\limits_{t_k\le t<t_{k+1}}\left|\int_{t_k}^t\sigma(\bar{Y}_\Delta(s))\mbox{d}W(s)\right|^p\right).
\end{split}
\end{equation*}
Then, by  Lemmas \ref{lemmono} and \ref{pmoment}, the H\"{o}lder inequality, and the BDG inequality, we arrive at
\begin{equation}\label{phizt}
\begin{split}
&\mathbb{E}\left(\sup\limits_{t_k\le t<t_{k+1}}|Z_\Delta(t)-Z_\Delta(t_k)|^p\right)
\le 2^{p-1}\Delta^{p-1}\mathbb{E}\int_{t_k}^{t_{k+1}}\left|b_\Delta(\bar{Y}_\Delta(s))\right|^p\mbox{d}s\\
&+2^{p-1}c_1(p)\mathbb{E}\left[\int_{t_k}^{t_{k+1}}\left\|\sigma(\bar{Y}_\Delta(s))\right\|^2\mbox{d}s\right]^{\frac{p}{2}}
\le  C\Delta^{p}+C\Delta^{\frac{p}{2}}\le C\Delta^{\frac{p}{2}}.
\end{split}
\end{equation}
Hence, Lemma \ref{pmoment} and \eqref{phizt} give that
\begin{equation*}
\begin{split}
&\mathbb{E}\left(\sup\limits_{t_k\le t<t_{k+1}}|Z_\Delta(t)-\bar{Y}_\Delta(t)|^p\right)
\le2^{p-1}\mathbb{E}\left(\sup\limits_{t_k\le t<t_{k+1}}|Z_\Delta(t)-Z_\Delta(t_k)|^p\right)\\
&+2^{p-1}\theta^p\Delta^p\mathbb{E}\left(\sup\limits_{t_k\le t<t_{k+1}}|b_\Delta(\bar{Y}_\Delta(t)|^p\right)\le C\Delta^{\frac{p}{2}}+C\Delta^{p}\le C\Delta^{\frac{p}{2}}.
\end{split}
\end{equation*}
Finally,
\begin{equation*}
\begin{split}
&\mathbb{E}\left(\sup\limits_{t_k\le t<t_{k+1}}|Y_\Delta(t)-\bar{Y}_\Delta(t)|^p\right)
\le C\Delta^{p}+\mathbb{E}\left(\sup\limits_{t_k\le t<t_{k+1}}|Z_\Delta(t)-\bar{Y}_\Delta(t)|^p\right)
\le C\Delta^{\frac{p}{2}}.
\end{split}
\end{equation*}
This completes the proof.
\end{proof}

\section{Strong convergence of modified $\theta$-EM Scheme}

Define the following stopping time
\begin{equation*}
\tau_\Delta=\inf\{t\in[0,T]: |X(t)|\ge g(\Delta)\}, \rho_\Delta=\inf\{t\in[0,T]: |Y_\Delta(t)|\ge g(\Delta)\}, \nu_\Delta=\tau_\Delta\wedge\rho_\Delta.
\end{equation*}
\begin{lem}\label{stop}
{\rm Under assumptions (A1)-(A2),  for any $T>0$, there exists a positive constant $C$ independent of $\Delta$ such that
\begin{equation*}
\mathbb{P}(\tau_\Delta\le T)\le \frac{C}{(g(\Delta))^p} \,
\mbox{ and
} \,
\mathbb{P}(\rho_\Delta\le T)\le \frac{C}{(g(\Delta))^p}.
\end{equation*}

}
\end{lem}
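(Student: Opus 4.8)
The plan is to bound the two exit probabilities by the standard argument: use $p$-th moments together with a stopping-time (localization) estimate, controlling the drift term via the linear-growth part of \textbf{(A1)} (i.e. the bound $\langle x,b_\Delta(x)\rangle\le L(1+|x|^2)$ from Lemma \ref{lemmono}, \emph{not} the superlinear bound $|b_\Delta(x)|\le L(1+|x|^l)$, which would be fatal here).

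\textit{Step 1: the exact solution.} Apply It\^o's formula to $|X(t\wedge\tau_\Delta)|^p$. The drift contributes terms of the form $p|X|^{p-2}\langle X,b(X)\rangle$ and $\frac{p(p-2)}{2}|X|^{p-4}|\sigma(X)^*X|^2+\frac{p}{2}|X|^{p-2}\|\sigma(X)\|^2$; all of these are bounded by $C(1+|X|^p)$ using the first inequality in \textbf{(A1)} and Cauchy--Schwarz, so after taking expectations (the stochastic integral has zero mean up to the stopping time) one gets $\E|X(t\wedge\tau_\Delta)|^p\le |x_0|^p+C\int_0^t\E|X(s\wedge\tau_\Delta)|^p\,\d s+Ct$, and Gronwall gives $\E|X(t\wedge\tau_\Delta)|^p\le C$ uniformly in $\Delta$. (In fact Lemma \ref{exactpm} already supplies $\E\sup_{[0,T]}|X(t)|^p\le C$, so this step can be quoted directly.) Then on the event $\{\tau_\Delta\le T\}$ we have $|X(\tau_\Delta)|\ge g(\Delta)$, whence by Chebyshev
\begin{equation*}
\P(\tau_\Delta\le T)\le \P\big(\sup_{0\le t\le T}|X(t)|\ge g(\Delta)\big)\le\frac{\E\sup_{0\le t\le T}|X(t)|^p}{(g(\Delta))^p}\le\frac{C}{(g(\Delta))^p}.
\end{equation*}

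\textit{Step 2: the numerical solution.} The same idea works, but one must be careful because of the implicitness: work with $Z_\Delta(t)=Y_\Delta(t)-\theta b_\Delta(Y_\Delta(t))\Delta$, which satisfies the clean SDE \eqref{continuous}. Lemma \ref{ytytk} already gives $\E\sup_{0\le t\le T}|Y_\Delta(t)|^p\le C$ with $C$ independent of $\Delta$ (for $\theta\in[\tfrac12,1]$; for $\theta=0$ the EM case is even simpler and the moment bound is classical). Hence, exactly as above, $\P(\rho_\Delta\le T)\le\P(\sup_{0\le t\le T}|Y_\Delta(t)|\ge g(\Delta))\le (g(\Delta))^{-p}\,\E\sup_{0\le t\le T}|Y_\Delta(t)|^p\le C/(g(\Delta))^p$. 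This is really just Chebyshev applied to the moment bounds already in hand, so the lemma is almost a corollary of Lemmas \ref{exactpm}--\ref{ytytk}.

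\textit{Main obstacle.} There is no serious analytic difficulty left once the uniform-in-$\Delta$ $p$-th moment bounds are available; the only point requiring care is \emph{uniformity} of the constant $C$ in $\Delta$, which is exactly what Lemma \ref{pmoment} and Lemma \ref{ytytk} were set up to provide, and the fact that the constant does not depend on $g(\Delta)$ — this is why it is essential to estimate the drift through $\langle x,b_\Delta(x)\rangle\le L(1+|x|^2)$ rather than through $|b_\Delta(x)|\le L(1+|x|^l)$, since the latter would introduce a $g(\Delta)$-dependent factor and destroy the bound. I would therefore present the proof as: quote the moment bounds, apply Chebyshev's inequality to $\sup_{[0,T]}|X(t)|^p$ and $\sup_{[0,T]}|Y_\Delta(t)|^p$ respectively, and conclude.
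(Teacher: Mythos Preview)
Your proposal is correct and matches the paper's approach: the paper itself dispatches this lemma in a single line, ``This Lemma can be shown by the Chebyshev inequality,'' relying on the uniform moment bounds from Lemmas~\ref{exactpm} and~\ref{ytytk}. Your more detailed write-up makes the same argument explicit, and your observation about needing the linear bound $\langle x,b_\Delta(x)\rangle\le L(1+|x|^2)$ rather than the polynomial bound is exactly why those earlier lemmas were proved with $\Delta$-independent constants.
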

This Lemma can be shown by the Chebyshev inequality. Moreover we have
\begin{lem}\label{exact-num2}
{\rm
Let (A1)-(A2) hold. Then we have
\begin{equation*}
\begin{split}
&\mathbb{E}\left(\sup\limits_{0\le t \le T}|X(t\wedge\nu_\Delta)-Y_\Delta(t\wedge\nu_\Delta)|^2\right)
\le CM_{g(\Delta)}e^{M_{g(\Delta)}}\Delta^{\frac{1}{2}},
\end{split}
\end{equation*}
where $C$ is a positive constant independent of $\Delta$.
}
\end{lem}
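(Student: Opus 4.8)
The plan is to run a Gronwall estimate on the stopped squared error after passing to the semimartingale $Z_\Delta(t):=Y_\Delta(t)-\theta\Delta\, b_\Delta(Y_\Delta(t))$, which by \eqref{continuous} satisfies $\d Z_\Delta(t)=b_\Delta(\bar{Y}_\Delta(t))\,\d t+\sigma(\bar{Y}_\Delta(t))\,\d W(t)$ with $Z_\Delta(0)=x_0-\theta\Delta\, b_\Delta(x_0)$. Since $|X(0)-Z_\Delta(0)|^2=\theta^2\Delta^2|b_\Delta(x_0)|^2=O(\Delta^2)$ and, exactly as in \eqref{what}, $|X(t)-Y_\Delta(t)|^2\le 2|X(t)-Z_\Delta(t)|^2+2\theta^2\Delta^2|b_\Delta(Y_\Delta(t))|^2$ with $\E\sup_{t\le T}|b_\Delta(Y_\Delta(t))|^2\le C$ by (A1) and Lemma \ref{ytytk}, it suffices to bound $\phi(T):=\E\sup_{0\le t\le T}|X(t\wedge\nu_\Delta)-Z_\Delta(t\wedge\nu_\Delta)|^2$. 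First I would apply It\^o's formula to $|X(t\wedge\nu_\Delta)-Z_\Delta(t\wedge\nu_\Delta)|^2$, take $\sup_{t\le T}$, and then expectations.

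For the drift contribution I would split $X-Z_\Delta=(X-\bar{Y}_\Delta)+(\bar{Y}_\Delta-Y_\Delta)+\theta\Delta\, b_\Delta(Y_\Delta)$ and pair each piece against $b(X)-b_\Delta(\bar{Y}_\Delta)$. The decisive term is $\langle X-\bar{Y}_\Delta,\,b(X)-b_\Delta(\bar{Y}_\Delta)\rangle$: on $[0,\nu_\Delta)$ both $|X|$ and $|\bar{Y}_\Delta|$ are $\le g(\Delta)$, so after adding and subtracting $b(\bar{Y}_\Delta)$ and applying (A2) with $R=g(\Delta)$ this is bounded by $M_{g(\Delta)}|X-\bar{Y}_\Delta|^2$ plus a remainder supported on the annulus $\{g(\Delta)-1<|\bar{Y}_\Delta|<g(\Delta)\}$ where $b_\Delta\neq b$; that remainder has the form $|X-\bar{Y}_\Delta|\,|b(\bar{Y}_\Delta)|\,\mathbf 1_{\{|\bar{Y}_\Delta|>g(\Delta)-1\}}$, out of which I would peel a multiple of $|X-\bar{Y}_\Delta|^2$ and control the rest by H\"older together with the bounded moments of Lemma \ref{pmoment} and a Chebyshev tail estimate as in Lemma \ref{stop} (which makes $\P(\sup_k|y_{t_k}|>g(\Delta)-1)$ as small as desired in powers of $g(\Delta)$). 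The two remaining inner products are handled by Young's inequality, using $\E\int_0^{t\wedge\nu_\Delta}|\bar{Y}_\Delta(s)-Y_\Delta(s)|^2\,\d s\le C\Delta$ (Lemma \ref{ytytk}) and the $\Delta$-independent bounds $\E|b(X(s))|^2\vee\E|b_\Delta(\bar{Y}_\Delta(s))|^2\vee\E|b_\Delta(Y_\Delta(s))|^2\le C$ (from (A1) and Lemmas \ref{exactpm}, \ref{pmoment}, \ref{ytytk}); because $b$ is only \emph{one-sidedly} Lipschitz, $|b(X)-b_\Delta(\bar{Y}_\Delta)|$ is not controlled by $|X-\bar{Y}_\Delta|$, so these two steps lose a power $\Delta^{1/2}$, which is the source of the stated rate. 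For the diffusion term, (A2) gives $\|\sigma(X)-\sigma(\bar{Y}_\Delta)\|^2\le M_{g(\Delta)}|X-\bar{Y}_\Delta|^2\le 2M_{g(\Delta)}|X-Y_\Delta|^2+2M_{g(\Delta)}|Y_\Delta-\bar{Y}_\Delta|^2$, the last term contributing $CM_{g(\Delta)}\Delta$; and the stochastic integral is treated by the BDG inequality followed by Young's inequality, returning $\tfrac12\phi(t)$ (absorbed on the left) plus $CM_{g(\Delta)}\int_0^t\E|X(s\wedge\nu_\Delta)-Y_\Delta(s\wedge\nu_\Delta)|^2\,\d s$ plus the same lower-order terms.

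Collecting everything, and using $M_{g(\Delta)}\ge 1$ together with $M_{g(\Delta)}\le\Delta^{-1/4}$ from \eqref{gdelta} to see that each ``constant'' piece (the $O(\Delta^2)$ initial error, the $M_{g(\Delta)}\Delta$ terms, the annulus remainder, and the $\Delta^{1/2}$ cross terms) is $O(\Delta^{1/2})$, one arrives at $\phi(t)\le C\Delta^{1/2}+CM_{g(\Delta)}\int_0^t\phi(s)\,\d s$. Gronwall's inequality gives $\phi(T)\le C\Delta^{1/2}e^{CM_{g(\Delta)}T}$, and translating back to $X-Y_\Delta$ and enlarging constants yields the asserted bound $CM_{g(\Delta)}e^{M_{g(\Delta)}}\Delta^{1/2}$.

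I expect the real difficulty to be not any single estimate but the bookkeeping forced by the unboundedness of $M_{g(\Delta)}$: since $M_{g(\Delta)}=M_R$ at $R=g(\Delta)$ and $g(\Delta)\to\infty$ as $\Delta\to0$, the Gronwall exponent $M_{g(\Delta)}T$ blows up, so the argument is worthless unless one keeps the product $M_{g(\Delta)}e^{M_{g(\Delta)}}$ intact throughout and invokes \eqref{gdelta} (which forces $M_{g(\Delta)}e^{M_{g(\Delta)}}\Delta^{1/2}\le\Delta^{1/4}\to0$) only at the very end — in particular $M_{g(\Delta)}$ must never be absorbed into a generic constant. The subsidiary nuisance is the mismatch between the truncation threshold $g(\Delta)-1$ and the stopping threshold $g(\Delta)$ noted above, which is why the moment bound of Lemma \ref{pmoment} and the tail bound of Lemma \ref{stop} already enter inside the drift estimate rather than only at the later comparison of $\nu_\Delta$ with $T$.
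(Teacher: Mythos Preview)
Your plan is essentially the paper's own proof: the same auxiliary process $Z_\Delta=Y_\Delta-\theta\Delta\,b_\Delta(Y_\Delta)$, It\^o's formula for $|X-Z_\Delta|^2$, the identical three-way splitting $X-Z_\Delta=(X-\bar Y_\Delta)+(\bar Y_\Delta-Y_\Delta)+\theta\Delta\,b_\Delta(Y_\Delta)$ in the drift pairing, (A2) for the principal piece, H\"older/Young plus the moment lemmas for the two cross pieces (yielding $C\Delta$ and $C\Delta^{1/2}$), BDG--Young on the martingale term, and Gronwall. Your extra concern about the annulus $\{g(\Delta)-1<|\bar Y_\Delta|\le g(\Delta)\}$ where $b_\Delta\neq b$ is well placed --- the paper simply asserts $b_\Delta(\bar Y_\Delta(s))=b(\bar Y_\Delta(s))$ on $[0,\nu_\Delta]$ without comment --- but the cleanest repair is to take the stopping levels in $\tau_\Delta,\rho_\Delta$ at $g(\Delta)-1$ instead of $g(\Delta)$ (Lemma~\ref{stop} is unaffected), rather than your Chebyshev tail, since the latter gives smallness in powers of $g(\Delta)^{-1}$, not directly $\Delta^{1/2}$.
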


\begin{proof}
Denote by $e(t)=X(t)-Z_\Delta(t)$. Applying the It\^{o} formula,
\begin{equation*}
\begin{split}
&|e(t\wedge\nu_\Delta)|^2\le|e(0)|^2+2\int_0^{t\wedge\nu_\Delta}\langle e(s),b(X(s))-b_\Delta(\bar{Y}_\Delta(s))\rangle\mbox{d}s\\
&+\int_0^{t\wedge\nu_\Delta}\|\sigma(X(s))-\sigma(\bar{Y}_\Delta(s))\|^2\mbox{d}s
+2\int_0^{t\wedge\nu_\Delta}\langle e(s),\sigma(X(s))-\sigma(\bar{Y}_\Delta(s))\mbox{d}W(s)\rangle\\
\le&|e(0)|^2+2\int_0^{t\wedge\nu_\Delta}\langle X(s)-\bar{Y}_\Delta(s),b(X(s))-b_\Delta(\bar{Y}_\Delta(s))\rangle\mbox{d}s+\int_0^{t\wedge\nu_\Delta}\|\sigma(X(s))-\sigma(\bar{Y}_\Delta(s))\|^2\mbox{d}s\\
&+2\int_0^{t\wedge\nu_\Delta}\langle e(s),\sigma(X(s))-\sigma(\bar{Y}_\Delta(s))\mbox{d}W(s)\rangle
+2\int_0^{t\wedge\nu_\Delta}\langle \theta b_\Delta(Y_\Delta(s))\Delta,b(X(s))-b_\Delta(\bar{Y}_\Delta(s))\rangle\mbox{d}s\\
&+2\int_0^{t\wedge\nu_\Delta}\langle \bar{Y}_\Delta(s)-Y_\Delta(s),b(X(s))-b_\Delta(\bar{Y}_\Delta(s))\rangle\mbox{d}s\\
=:&|e(0)|^2+\sum\limits_{i=1}^5e_i(t),
\end{split}
\end{equation*}
where $e(0)=\theta b_\Delta(x_0)\Delta$. Since for $0\le s\le t\wedge\nu_\Delta$, one has $|X(s)|\vee|\bar{Y}_\Delta(s)|\le g(\Delta)$. With the definition of $b_\Delta$, we then have $b_\Delta(\bar{Y}_\Delta(s))=b(\bar{Y}_\Delta(s))$ for $0\le s\le t\wedge\nu_\Delta$. It follows from (A2) and  Lemma \ref{ytytk} that
\begin{equation}\label{asedelta12}
\begin{split}
&\mathbb{E}\left(\sup\limits_{0\le u\le t}|e_1(u)+e_2(u)|\right)
\le CM_{g(\Delta)}\mathbb{E}\int_0^{t\wedge\nu_\Delta}|X(s)-\bar{Y}_\Delta(s)|^2\mbox{d}s\\
\le&CM_{g(\Delta)}\int_0^{t}\mathbb{E}\left(\sup\limits_{0\le u\le s}|X(u\wedge\nu_\Delta)-Y_\Delta(u\wedge\nu_\Delta)|^2\right)\mbox{d}s+CM_{g(\Delta)}\Delta.
\end{split}
\end{equation}
By the BDG inequality, we derive from Lemma \ref{ytytk} that
\begin{equation}\label{asedelta3}
\begin{split}
&\mathbb{E}\left(\sup\limits_{0\le u\le t}|e_3(u)|\right)
\le 12\mathbb{E}\left[\int_0^{t\wedge\nu_\Delta}|e(s)|^{2}\|\sigma(X(s))-\sigma(\bar{Y}_\Delta(s))\|^2\mbox{d}s\right]^{1/2}\mbox{d}s\\
\le&12\mathbb{E}\left[\sup\limits_{0\le u\le t}|e(s\wedge\nu_\Delta)|^{2}\int_0^{t\wedge\nu_\Delta}\|\sigma(X(s))-\sigma(\bar{Y}_\Delta(s))\|^2\mbox{d}s\right]^{1/2}\\
\le&\frac{1}{2}\mathbb{E}\left(\sup\limits_{0\le u\le t}|e(s\wedge\nu_\Delta)|^{2}\right)
+CM_{g(\Delta)}\int_0^{t}\mathbb{E}\left(\sup\limits_{0\le u\le s}|X(u\wedge\nu_\Delta)-Y_\Delta(u\wedge\nu_\Delta)|^2\right)\mbox{d}s+CM_{g(\Delta)}\Delta.
\end{split}
\end{equation}
With (A1) and the Young inequality, it follows from Lemma \ref{ytytk} that
\begin{equation}\label{asedelta4}
\begin{split}
\mathbb{E}\left(\sup\limits_{0\le u\le t}|e_4(u)|\right)\le & 2\theta\Delta\int_0^{t}\left[\mathbb{E}|b(Y_\Delta(s\wedge\nu_\Delta))|^2\right]^{\frac{1}{2}}\left[\mathbb{E}|b(X(s\wedge\nu_\Delta))-b(\bar{Y}_\Delta(s\wedge\nu_\Delta))|^{2}\right]^{\frac{1}{2}}\mbox{d}s\le C\Delta.
\end{split}
\end{equation}
Now, using (A1),  Lemmas \ref{exactpm} and  \ref{ytytk} again, we get
\begin{equation}\label{asedelta5}
\begin{split}
\mathbb{E}\left(\sup\limits_{0\le u\le t}|e_5(u)|\right)\le&2\int_0^{t}\left[\mathbb{E}|\bar{Y}_\Delta(s\wedge\nu_\Delta)-Y_\Delta(s\wedge\nu_\Delta)|^{2}\right]^{\frac{1}{2}}
\left[\mathbb{E}|b(X(s\wedge\nu_\Delta))-b(\bar{Y}_\Delta(s\wedge\nu_\Delta))|^{2}\right]^{\frac{1}{2}}\\
\le& C\Delta^{\frac{1}{2}}.
\end{split}
\end{equation}
Combining \eqref{asedelta12}-\eqref{asedelta5}, we obtain
\begin{equation*}
\begin{split}
&\mathbb{E}\left(\sup\limits_{0\le u\le t}|e(u\wedge\nu_\Delta)|^2\right) \le CM_{g(\Delta)}\Delta+C\Delta^{\frac{1}{2}}+CM_{g(\Delta)}\int_0^{t}\mathbb{E}\left(\sup\limits_{0\le u\le s}|X(u\wedge\nu_\Delta)-Y_\Delta(u\wedge\nu_\Delta)|^2\right)\mbox{d}s.
\end{split}
\end{equation*}
Hence, Lemma \ref{lemmono} and Lemma \ref{ytytk} lead to
\begin{equation*}
\begin{split}
&\mathbb{E}\left(\sup\limits_{0\le u \le t}|X(u\wedge\nu_\Delta)-Y_\Delta(u\wedge\nu_\Delta)|^2\right)\\
\le&\mathbb{E}\left(\sup\limits_{0\le u \le t}|e(u\wedge\nu_\Delta)|^2\right)+\theta^2\Delta^2\mathbb{E}\left(\sup\limits_{0\le u \le t}|b_\Delta(Y_\Delta(u\wedge\nu_\Delta))|^2\right)\\
\le&CM_{g(\Delta)}\int_0^{t}\mathbb{E}\left(\sup\limits_{0\le u\le s}|X(u\wedge\nu_\Delta)-Y_\Delta(u\wedge\nu_\Delta)|^2\right)\mbox{d}s
+CM_{g(\Delta)}\Delta^{\frac{1}{2}}.
\end{split}
\end{equation*}
Finally, the Gronwall inequality leads to the desired result.

\end{proof}

\begin{thm}\label{theorem1}
{\rm Let (A1)-(A2) hold. Then for any $\epsilon>0$, there exists a $\Delta^*$ such that
\begin{equation*}
\mathbb{E}\left(\sup\limits_{0\le t\le T}|X(t)-Y_\Delta(t)|^2\right)<\epsilon, \mbox{ whenever } \Delta\in (0,  \Delta^*).
\end{equation*}
}
\end{thm}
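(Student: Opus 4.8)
The plan is to deduce the theorem from Lemma~\ref{exact-num2} together with the stopping-time estimates of Lemma~\ref{stop}, using the usual truncation (localization) argument. First I would split the quantity $\mathbb{E}\left(\sup_{0\le t\le T}|X(t)-Y_\Delta(t)|^2\right)$ over the event $\{\nu_\Delta>T\}$ and its complement. On $\{\nu_\Delta>T\}$ we have $t\wedge\nu_\Delta=t$ for all $t\in[0,T]$, so that contribution is bounded by $\mathbb{E}\left(\sup_{0\le t\le T}|X(t\wedge\nu_\Delta)-Y_\Delta(t\wedge\nu_\Delta)|^2\right)$, which by Lemma~\ref{exact-num2} is at most $CM_{g(\Delta)}e^{M_{g(\Delta)}}\Delta^{1/2}$.

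Next I would handle the event $\{\nu_\Delta\le T\}$ via Young's (or H\"older's) inequality with a free parameter: for any $\delta>0$,
\begin{equation*}
\mathbb{E}\left(\sup_{0\le t\le T}|X(t)-Y_\Delta(t)|^2\mathbf{1}_{\{\nu_\Delta\le T\}}\right)\le \frac{\delta}{2}\,\mathbb{E}\left(\sup_{0\le t\le T}|X(t)-Y_\Delta(t)|^{4}\right)^{1/2}\cdot\sqrt{\frac{2}{\delta}}\,\mathbb{P}(\nu_\Delta\le T)^{1/2}.
\end{equation*}
More cleanly, I would write $\mathbb{E}(\Xi\mathbf{1}_A)\le \delta\,\mathbb{E}\Xi^{2}/2 + \mathbb{P}(A)/(2\delta)$ is not quite dimensionally right; instead use $\mathbb{E}(\Xi\mathbf{1}_A)\le (\mathbb{E}\Xi^{p})^{1/p}(\mathbb{P}(A))^{1-1/p}$ with $p=2$ here applied to $\Xi=\sup_{t\le T}|X(t)-Y_\Delta(t)|^2$, giving $\left(\mathbb{E}\sup_{t\le T}|X(t)-Y_\Delta(t)|^{4}\right)^{1/2}\mathbb{P}(\nu_\Delta\le T)^{1/2}$. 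By Lemma~\ref{exactpm} and Lemma~\ref{ytytk} (with $p=4$), the first factor is bounded by a constant $C$ independent of $\Delta$; by Lemma~\ref{stop} and $\mathbb{P}(\nu_\Delta\le T)\le\mathbb{P}(\tau_\Delta\le T)+\mathbb{P}(\rho_\Delta\le T)$, the second factor is at most $C/(g(\Delta))^{p/2}$ for any fixed $p\ge2$; choosing $p$ large (say $p=4$, so the bound is $C/(g(\Delta))^{2}$) makes this term small as $\Delta\to0$ since $g(\Delta)\to\infty$.

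Combining the two pieces,
\begin{equation*}
\mathbb{E}\left(\sup_{0\le t\le T}|X(t)-Y_\Delta(t)|^2\right)\le CM_{g(\Delta)}e^{M_{g(\Delta)}}\Delta^{1/2}+\frac{C}{(g(\Delta))^{2}}.
\end{equation*}
The first term on the right is controlled precisely by the defining property \eqref{gdelta} of $g$: since $M_{g(\Delta)}e^{M_{g(\Delta)}}\Delta^{1/4}\le 1$, we get $CM_{g(\Delta)}e^{M_{g(\Delta)}}\Delta^{1/2}\le C\Delta^{1/4}\to0$; the second term tends to $0$ because $g(\Delta)\to\infty$ as $\Delta\to0$. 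Hence the whole right-hand side tends to $0$ as $\Delta\to0$, and given $\epsilon>0$ we simply choose $\Delta^*$ small enough that both terms are below $\epsilon/2$ for all $\Delta\in(0,\Delta^*)$.

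The only genuinely delicate point is the bookkeeping in the split over $\{\nu_\Delta\le T\}$: one must make sure the higher-moment bounds invoked (Lemma~\ref{ytytk} and Lemma~\ref{exactpm} at exponent $p=4$) are indeed uniform in $\Delta$, and that the exponent chosen in the Chebyshev/H\"older step is large enough that $(g(\Delta))^{-p/2}$ decays — here any $p\ge2$ works since $g(\Delta)\to\infty$, so there is ample room. Everything else is a direct assembly of the preceding lemmas, so I do not anticipate a real obstacle beyond this routine localization argument.
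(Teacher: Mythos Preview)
Your proposal is correct and follows essentially the same localization argument as the paper: split over $\{\nu_\Delta>T\}$ versus $\{\nu_\Delta\le T\}$, invoke Lemma~\ref{exact-num2} on the first piece, and control the second piece via the moment bounds of Lemmas~\ref{exactpm} and~\ref{ytytk} together with the probability estimate of Lemma~\ref{stop}. The only cosmetic difference is that the paper uses Young's inequality with a free parameter $\eta$ (and general exponent $p$) to handle the bad event, whereas you use Cauchy--Schwarz directly; both yield the same conclusion.
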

\begin{proof}
By the Young inequality, we derive that for any $\eta>0$,
\begin{equation}\label{eTq22}
\begin{split}
&\mathbb{E}\left(\sup\limits_{0\le t\le T}|X(t)-Y_\Delta(t)|^2\right)=\mathbb{E}\left({\bf I}_{\{\tau_\Delta>T,\rho_\Delta>T\}}\sup\limits_{0\le t\le T}|X(t)-Y_\Delta(t)|^2\right)\\
&+\frac{2\eta}{p}\mathbb{E}\left(\sup\limits_{0\le t\le T}|X(t)-Y_\Delta(t)|^{p}\right)+\frac{p-2}{p\eta^{\frac{2}{p-2}}}\mathbb{P}(\tau_\Delta\le T~{\rm or}~\rho_\Delta\le T).
\end{split}
\end{equation}
By Lemmas \ref{exactpm}, \ref{ytytk} and \ref{stop}, we derive
\begin{equation*}
\begin{split}
\mathbb{E}\left(\sup\limits_{0\le t\le T}|X(t)-Y_\Delta(t)|^2\right)\le&\mathbb{E}\left(\sup\limits_{0\le t\le T}|X(t\wedge\nu_\Delta)-Y_\Delta(t\wedge\nu_\Delta)|^2\right)+\frac{2\eta C}{p}+\frac{2(p-2)C}{p\eta^{\frac{2}{p-2}}(g(\Delta))^{p}}.
\end{split}
\end{equation*}
For any given $\epsilon>0$, we choose $\eta$  small enough such that
 \begin{equation*}
\begin{split}
\frac{2\eta C}{p}<\frac{\epsilon}{3},
\end{split}
\end{equation*}
and $\Delta$ small enough such that
 \begin{equation*}
\frac{2(p-2)C}{p\eta^{\frac{2}{p-2}}(g(\Delta))^{p}}<\frac{\epsilon}{3} \mbox{ and }
M_{g(\Delta)}e^{M_{g(\Delta)}}\Delta^{\frac{1}{2}}<\frac{\epsilon}{3}.
\end{equation*}
Thus, we get
\begin{equation*}
\begin{split}
\mathbb{E}\left(\sup\limits_{0\le t\le T}|X(t)-Y_\Delta(t)|^2\right)<\epsilon~~\mbox{  whenever } \Delta \mbox{ is sufficiently small}.
\end{split}
\end{equation*}
\end{proof}

\begin{rem}
{\rm Let's illustrate the result of Theorem \ref{theorem1} through Example \ref{example1}. For $\varepsilon\in(0,1)$, let $g(\Delta)=\frac{1}{2}(\ln\Delta^{-\frac{\varepsilon}{8}}-1)$, it is obvious that $g(\Delta)$ is strictly decreasing and $g(\Delta)\rightarrow\infty$ as $\Delta\rightarrow 0$. Moreover, we have $M_{g(\Delta)}e^{M_{g(\Delta)}}\Delta^{\frac{1}{4}}\le1$ and $(g(\Delta))^l\Delta\le1$ for some $\Delta$. Thus, by Theorem \ref{theorem1}, the modified $\theta$-EM scheme converges strongly to exact solution of the local one-sided Lipschitz equation in Example \ref{example1}, that is,
\begin{equation*}
\lim\limits_{\Delta\rightarrow0}\mathbb{E}\left(\sup\limits_{0\le t\le T}|X(t)-Y_\Delta(t)|^2\right)=0.
\end{equation*}

}
\end{rem}

\begin{rem}
Hutzenthaler et al. \cite{hjk11} pointed out that the absolute moments of EM scheme at a finite time could diverge to infinity for SDEs with one-sided Lipschitiz and superlinear growing coefficients, which implied that the EM scheme would not converge in the strong sense to the exact solution in this case. In our paper, the superlinear property is avoided in the estimation of $|y_{t_k}|^p$ (see Lemma \ref{pmoment} for more details) for $\theta\in[1/2,1]$, and it is shown that for SDEs with one-sided Lipschitiz and superlinear growing coefficients, our modified $\theta$-EM scheme with $\theta\in[1/2,1]$ converges to the exact solution.
\end{rem}

\begin{rem}
Under the local one-sided Lipschitz condition for the drift coefficients, we have shown that the modified $\theta$-EM converges to the true solution, however, we have not obtained the rate of the convergence, which could be an open problem.
\end{rem}

\end{document}